\documentclass[english]{article}
\usepackage[T1]{fontenc}
\usepackage[latin9]{inputenc}
\usepackage{geometry}
\geometry{verbose,tmargin=1in,bmargin=1in,lmargin=1in,rmargin=1in}
\usepackage{textcomp}
\usepackage{amsmath}
\usepackage{amsthm}
\usepackage{amssymb}
\usepackage{listings}
\usepackage{color}
\usepackage{chngcntr}
\usepackage{todonotes}
\counterwithin{table}{section}

\makeatletter
\theoremstyle{plain}
\newtheorem{thm}{\protect\theoremname}
  \theoremstyle{definition}

  \theoremstyle{plain}
  \newtheorem{conjecture}[thm]{\protect\conjecturename}
  \theoremstyle{plain}
  \newtheorem{lem}[thm]{\protect\lemmaname}
\newtheorem*{lem*}{\protect\lemmaname}
  \newtheorem{ques}[thm]{Question}
\newenvironment{namedlemma*}[1]
  {\renewcommand{\lemmaname}{#1}%
   \begin{lem*}}
  {\end{lem*}}

\usepackage{babel}
\providecommand{\conjecturename}{Conjecture}
  \providecommand{\definitionname}{Definition}
  \providecommand{\lemmaname}{Lemma}
\providecommand{\theoremname}{Theorem}

\DeclareFixedFont{\ttb}{T1}{txtt}{bx}{n}{12} 
\DeclareFixedFont{\ttm}{T1}{txtt}{m}{n}{12}  

\numberwithin{equation}{section}

\makeatother

\begin{document}

\title{A proof of Tomescu's graph coloring conjecture}

\author{Jacob Fox\thanks{Department of Mathematics, Stanford University, Stanford, CA 94305, USA. Email: {\tt jacobfox@stanford.edu}. Research supported by a Packard Fellowship, and by NSF Career Award DMS-1352121.},\and Xiaoyu He\thanks{Department of Mathematics, Stanford University, Stanford, CA 94305, USA. Email: {\tt alkjash@stanford.edu}.},\and Freddie Manners\thanks{Department of Mathematics, Stanford University, Stanford, CA 94305, USA. Email: {\tt fmanners@stanford.edu}.}}
\maketitle
\begin{abstract}
In 1971, Tomescu conjectured that every connected graph $G$ on $n$
vertices with chromatic number $k\geq4$ has at most $k!(k-1)^{n-k}$
proper $k$-colorings. Recently, Knox and Mohar proved Tomescu's conjecture for $k=4$ and $k=5$. In this paper, we complete the proof of Tomescu's conjecture for all $k\ge 4$, and show that equality occurs if and only if $G$ is a $k$-clique with trees attached to each vertex. 
\end{abstract}

\section{Introduction}

Let $k$ be a positive integer and $G=(V,E)$ be a graph\footnote{All graphs in this paper are simple and undirected.}.
A {\it proper $k$-coloring}, or simply a {\it $k$-coloring}, of $G$ is a function
$c:V\to[k]$ (here $[k] = \{1,\ldots, k\}$) such that $c(u)\ne c(v)$ whenever
$uv\in E$. The chromatic number $\chi(G)$ is the minimum $k$ for which there exists a $k$-coloring of $G$. Let $P_{G}(k)$ denote the number of $k$-colorings
of $G$. This function is a polynomial in $k$ and is thus called the
\textit{chromatic polynomial} of $G$. In 1912, Birkhoff \cite{Bi1}
introduced the chromatic polynomial for planar graphs in an attempt
to solve the Four Color Problem using tools from analysis. The chromatic
polynomial was later defined and studied for general graphs by Whitney
\cite{Wh}.

Despite a great deal of attention over the past century, our understanding of the chromatic polynomial is still quite
poor. In particular, proving general bounds on the chromatic polynomial remains a major challenge. The first result of this type, due to Birkhoff \cite{Bi2}, states that
\[
P_{G}(k)\ge k(k-1)(k-2)(k-3)^{n-3}
\]
for any planar graph
$G$ on $n$ vertices and any real number $k\ge5$. Birkhoff and Lewis
\cite{BiLe} later conjectured this to be true for all real $k\geq4$.
The case $k=4$ is equivalent to the celebrated Four Color Theorem, which was
resolved much later.

In 1971, Tomescu \cite{To} conjectured that 
\begin{equation}\label{eq:tomescu-conj}
P_{G}(k)\leq k!(k-1)^{n-k}
\end{equation}
for every connected graph $G$ on $n$ vertices with chromatic
number $k\geq4$. This inequality can be easily shown if $G$ contains a $k$-clique. It is also tight if the $2$-core of $G$ is exactly a $k$-clique, that is, if after repeatedly deleting vertices of degree at most one, the remaining graph is a $k$-clique. Equivalently, such a graph $G$ is formed by attaching a tree to each vertex of the complete graph $K_k$. Tomescu further conjectured that such graphs are the only examples for which inequality (\ref{eq:tomescu-conj}) is tight. 

Tomescu's conjecture has received considerable attention over the past 46 years. Tomescu observed that if $k=2$, inequality (\ref{eq:tomescu-conj}) holds as an equality for every connected bipartite graph, whereas it is false for $k=3$ when $G$ is an odd cycle of length at least five. This shows that the condition $k\ge 4$ is necessary; see \cite{KnMo1,To72,To94,To97} for details. Tomescu \cite{To1} also proved his conjecture for $4$-chromatic planar graphs. Further partial results were obtained in \cite{BE15,Er1,Er2}, and the book \cite{DKT} gives an overview of the area. Recently, Knox and Mohar solved the cases $k=4$ \cite{KnMo1} and $k=5$ \cite{KnMo2} of Tomescu's conjecture.

In this paper, we prove the full conjecture.

\begin{thm}
\label{thm:main}If $G$ is a connected graph on $n$ vertices with $\chi(G)=k \geq 4$,
then 
\begin{equation*}
P_{G}(k)\le k!(k-1)^{n-k},
\end{equation*}
with equality if and only if the $2$-core of $G$ is a $k$-clique.
\end{thm}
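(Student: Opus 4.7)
The plan is to induct on $n$. The base case $n = k$ forces $G = K_k$ with $P_G(k) = k!$, which attains equality.

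For the inductive step, first suppose $G$ has a leaf $v$ with neighbor $u$. Then $G - v$ is connected with $\chi(G - v) = k$, since any proper $(k-1)$-coloring of $G - v$ would extend to a proper $(k-1)$-coloring of $G$ by picking any color for $v$ different from $c(u)$, contradicting $\chi(G) = k$. The induction hypothesis gives $P_{G - v}(k) \leq k!(k-1)^{n-1-k}$, so $P_G(k) = (k-1) P_{G - v}(k) \leq k!(k-1)^{n-k}$. Since deleting leaves does not alter the $2$-core, equality in $G$ is equivalent to equality in $G - v$.

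It remains to handle $\delta(G) \geq 2$, where $G$ coincides with its own $2$-core. If $G = K_k$ we have equality; otherwise we must prove strict inequality. The main tool is to pick a vertex $v$ of minimum degree and expand
\[
P_G(k) = \sum_{c} \bigl( k - |c(N(v))| \bigr),
\]
where $c$ ranges over proper $k$-colorings of $G - v$. If some $u_i, u_j \in N(v)$ are adjacent, then $|c(N(v))| \geq 2$ for every $c$, so $P_G(k) \leq (k - 2) P_{G - v}(k) < k!(k-1)^{n-k}$ follows from induction once $k \geq 4$. When $\deg v = 2$ with independent neighbors $\{u_1, u_2\}$, the contraction identity
\[
P_G(k) = (k - 2) P_{G - v}(k) + P_{G'}(k),
\]
with $G' = (G - v) / u_1 u_2$, combined with the induction hypothesis for both terms, gives $P_G(k) \leq k!(k-1)^{n-k-2}\bigl((k-2)(k-1) + 1\bigr)$, which is strictly below the target $k!(k-1)^{n-k-2}(k-1)^2$ by a factor of $k-2$. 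For larger minimum degree $\delta$, an analogous inclusion--exclusion over partitions $\pi$ of $N(v)$ writes $P_G(k)$ as a signed combination of $P_{(G-v)/\pi}(k)$, and a polynomial comparison in $k$ should again produce a strict savings whenever $k \geq 4$.

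The principal obstacle will be ensuring that $G - v$ and the various contractions arising above remain connected and $k$-chromatic, so that the induction hypothesis applies. When it does not, I would pass to a minimal counterexample and use a $k$-critical subgraph $H \subseteq G$: if $V(H) \subsetneq V(G)$, a low-degree vertex outside $H$ can be selected whose deletion preserves both connectivity and $\chi = k$; otherwise $G = H$ is itself $k$-critical, forcing $\delta(G) \geq k - 1 \geq 3$ and precluding the clean $\delta = 2$ step above. In this $k$-critical regime one must exploit the dense local structure of $G$ together with the more elaborate inclusion--exclusion identity from the previous paragraph, and additionally pin down the equality case inductively so that every strict inequality along the way is recorded. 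Pushing this through in full generality for all $k \geq 6$, which Knox and Mohar only resolved for $k = 4, 5$ via intricate structural case analysis, is where I expect the technical heart of the proof to lie.
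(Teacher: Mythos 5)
Your proposal reduces to the hardest case without providing an argument for it, and you say so yourself. The leaf-deletion step and the degree-$2$ contraction identity are correct and standard, and the reduction to a $k$-critical counterexample is the right move. But once $G$ is $k$-critical you have $\delta(G)\ge k-1\ge 3$, so the clean $\delta\le 2$ recursions never fire, and the ``analogous inclusion--exclusion over partitions of $N(v)$'' is not an argument --- it is exactly the intricate structural case analysis that Knox and Mohar had to do by hand for $k=4$ and $k=5$, and which nobody has pushed through for general $k$. You flag this as ``the technical heart of the proof,'' which is accurate, but that means the proposal stops precisely where the theorem begins. There is also a secondary issue you do not address: your recursions bound $P_G(k)$ by $P_{G-v}(k)$ and $P_{G'}(k)$, but the induction hypothesis only applies to those graphs if they remain connected \emph{and} $k$-chromatic. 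If $\chi(G-v)=k-1$, you would need a bound on the number of $k$-colorings of a $(k-1)$-chromatic graph, i.e.\ the generalized Tomescu conjecture for $x>\chi$, which is a strictly harder open problem (and one the paper explicitly notes its methods do not resolve).

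The paper takes an entirely different route for the hard regime. Rather than deleting or contracting a single vertex and recursing, it bounds $P_G(k)$ globally by an entropy inequality: the Overprediction Lemma says that for any vertex ordering $\pi$, $\log P_G(k)\le \mathbb{E}_c[\log X_\pi(c)]$ where $X_\pi(c)$ is the product of greedy color counts, a consequence of nonnegativity of Kullback--Leibler divergence. Averaging over random orderings converts this into a per-vertex quantity $T(c,v)$, which AM--GM bounds by $W(c,v)=\sum_i 1/(c_i(v)+1)$. The input from $k$-criticality is soft: a minimal counterexample has $\Pr[c(u)=c(v)]<1/(k-1)$ for every pair, which (applied inside each neighborhood) constrains the second moment of the color distribution in $N(v)$; a three-line linear program then caps $W(c,v)$, and ``radiant'' vertices (one per color, seeing all other colors, guaranteed in any $\chi$-coloring) recover the $k!$ factor. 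This sidesteps structural case analysis entirely. What your approach would buy, if it worked, is elementarity; what the paper's approach buys is uniformity in $k$, which is the whole problem.
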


In order to prove Theorem \ref{thm:main}, we introduce a method for bounding the number of $k$-colorings of a graph using what we call the Overprediction Lemma. The idea is to construct a probability distribution on the set of colorings by randomly coloring one vertex at a time. From this distribution one obtains an upper bound on $P_G(k)$ using the fact that the cross-entropy between two random variables is at least the entropy of either (this is equivalent to the nonnegativity of Kullback--Leibler divergence).  We expect this lemma to have further applications in graph coloring. The argument closely resembles what is sometimes called the ``entropy method'' for combinatorial upper bounds, first developed by Radhakrishnan \cite{Ra} to give an alternative proof of Br\'egman's Theorem, and applied subsequently in various forms, see e.g.\ \cite{GlLu,Ka,LiLu}.

In the next section, we state and prove the Overprediction Lemma using the entropy inequality mentioned above and develop probabilistic tools for applying it.

In Section \ref{sec:Setup}, we collect some basic results on graphs with chromatic number $k$, and set up the pieces of our inductive proof of Theorem \ref{thm:main}. There, we also show that Theorem \ref{thm:main} holds for small graphs $G$ in the range $|V| \le 2k-2$, which serve as base cases for the induction.

In Section \ref{sec:A-Linear-Program}, we describe a simple linear
program whose optimum describes an upper bound for the number of proper $k$-colorings of a possible counterexample to Tomescu's conjecture, and complete the argument in Section \ref{sec:Radiant-Vertices}. Together with the Overprediction Lemma, this proves Theorem \ref{thm:main} for large graphs $G$, in the range $|V| > 2k-2$ when $k \ge 5$. 

Finally, in Section~\ref{sec:small-4-crit} we use a more delicate calculation to handle the case $k=4$.

\section{The Overprediction Lemma\label{sec:Overprediction-Lemma}}

Fix a graph $G=(V,E)$ of chromatic number at most $k$. Given a $k$-coloring $c$ of $G$ and a (linear) ordering $\pi$ of $V$, we can predict the total number $P_G(k)$ of $k$-colorings of $G$ as follows.

Define a prefix of $\pi$ to be a set $U\subseteq V$ with the property that if $v\in U$ then any vertex before $v$ lies in $U$. Define a {\it $\pi$-precoloring} of $G$ to be a $k$-coloring of an induced subgraph of $G$ whose vertices form a prefix of $\pi$. Define a {\it maximal $\pi$-precoloring} to be a $\pi$-precoloring which cannot be extended to the next vertex in $\pi$. In particular, all $k$-colorings are also maximal $\pi$-precolorings. 

Define the {\it backneighborhood} of a vertex $v$ with respect to $\pi$, denoted $N_{\pi}^{-}(v)$, to be the set of neighbors of $v$ coming before $v$ in the ordering $\pi$. If $c$ is a maximal $\pi$-precoloring of $G$, let $X_{\pi}(c,v)$ be the number of colors in $[k]$ that do not appear among the colored vertices in $N_{\pi}^{-}(v)$. If $U$ is the set of vertices colored by $c$, let
\[
X_{\pi}(c)=\prod_{v\in U}X_{\pi}(c,v).
\]

If the vertices of $G$ are greedily colored in the order of $\pi$, the number of choices of the color for $v$, having already colored all the preceding vertices according to $c$, is exactly $X_{\pi}(c,v)$. It is thus natural to suspect that if $c$ is a $k$-coloring of $G$, then $X_{\pi}(c)$ is a good estimate for $P_G(k)$. For example, if $G = K_k$, then for any ordering $\pi$ and any $k$-coloring $c$, $P_G(k)=X_{\pi}(c)=k!$. 

We will be taking expectations of random variables over random choices of a vertex, a coloring, and an ordering of $V$. The subscripts in the expectation operator $\mathbb{E}[\cdot]$ denote the random choice(s) we are taking expectation over, and choices are made uniformly and independently from among all possible ones.

For example, we write $\mathbb{E}_{\pi,c}[X]$ for the expectation of the random
variable $X$ over a uniform random choice of a vertex ordering $\pi$
and a $k$-coloring $c$ of $G$.

\begin{namedlemma*}{The Overprediction Lemma}
\label{lem:overprediction} For any graph $G = (V,E)$ with $\chi(G)\le k$ and any ordering $\pi$ of $V$,
\[
P_G(k) \le \exp(\mathbb{E}_{c}[\log X_{\pi}(c)]).
\]
\end{namedlemma*}
\begin{proof}
Let $C$ be the family of all $k$-colorings of $G$ and $C_\pi$ be the family of all maximal $\pi$-precolorings of $G$, so $C \subseteq C_{\pi}$ and $|C|=P_G(k)$. We wish to show
\begin{equation}\label{eq:target}
\mathbb{E}_{c\in C} [\log X_\pi (c)] \ge \log |C|.
\end{equation}

Define two probability distributions $p$, $q$ over $C_\pi$ as follows. The distribution $p$ is uniform over colorings $c\in C$. The distribution $q$ picks each $c\in C_\pi$ with probability $X_\pi(c)^{-1}$. Equivalently, $q$ constructs a random maximal pre-coloring $c$ by greedily coloring $G$ one vertex at a time in the order of $\pi$, making a uniform choice for the color of $v$ at each step out of all $X_\pi(c,v)$ possible colors. This process stops at a maximal $\pi$-precoloring when it runs out of choices or vertices. 

By the nonnegativity of Kullback--Leibler divergence from $p$ to $q$, we find that

\begin{equation}\label{eq:entropy}
- \sum_{c\in C_{\pi}} p(c) \log q(c) \ge - \sum_{c\in C_{\pi}} p(c) \log p(c).
\end{equation}

Note that $p$ is supported on $C$. Thus, the left hand side of the inequality is
\begin{equation*}
- \sum_{c\in C_{\pi}} p(c) \log q(c) = \mathbb{E}_{c\in C}[\log X_\pi (c)],
\end{equation*}
while the right hand side is
\begin{equation*}
- \sum_{c\in C_{\pi}} p(c) \log p(c) = \log |C|,
\end{equation*}
so (\ref{eq:entropy}) is exactly equivalent to the desired inequality (\ref{eq:target}).
\end{proof}

Let 
\begin{equation}\label{eq:defT}
T(c,v)=\exp(\mathbb{E}_{\pi}[\log X_{\pi}(c,v)])
\end{equation}
be the geometric mean of $X_{\pi}(c,v)$ over a uniform random permutation
$\pi$, and define
\[
T=\prod_{v\in V}\exp(\mathbb{E}_{c}[\log T(c,v)]).
\]

By linearity of expectation,
\begin{align*}
\exp(\mathbb{E}_{\pi,c}[\log X_{\pi}(c)]) & = \prod_{v\in V}\exp(\mathbb{E}_{\pi,c}[\log X_{\pi}(c,v)])\\
 & = \prod_{v\in V}\exp(\mathbb{E}_{c}[\log T(c,v)])\\
 & = T.
\end{align*}

After averaging over orderings $\pi$, we can upper bound $P_G(k)$ by the quantity $T$.

Given a $k$-coloring $c$, define
$c_{i}(v)$ to be the number of neighbors of $v$ colored with color
$i$. The value of $T(c,v)$ is completely determined by the $k$-tuple $(c_{i}(v))_{i\le k}$, and we can obtain an upper bound
for $T(c,v)$ via the AM--GM inequality. Define
\begin{equation}\label{eq:defW}
W(c,v)=\sum_{i\le k} \frac{1}{c_{i}(v)+1}.
\end{equation}

\begin{lem}\label{lem:AMGM}
With $T(c,v)$ and $W(c,v)$ defined by (\ref{eq:defT}) and (\ref{eq:defW}) respectively, we have
\[
T(c,v)\le W(c,v)
\]
for every $k$-coloring $c$ and vertex $v$.
\end{lem}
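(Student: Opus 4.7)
The plan is to apply AM--GM (equivalently, Jensen's inequality for the concave function $\log$) to pass from the geometric mean defining $T(c,v)$ to the arithmetic mean $\mathbb{E}_\pi[X_\pi(c,v)]$, and then to compute this arithmetic mean directly as $W(c,v)$.

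For the first step, Jensen applied to $\log$ gives
\[
T(c,v) = \exp(\mathbb{E}_\pi[\log X_\pi(c,v)]) \le \mathbb{E}_\pi[X_\pi(c,v)],
\]
so it suffices to show $\mathbb{E}_\pi[X_\pi(c,v)] = W(c,v)$.

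For the second step, I would expand $X_\pi(c,v)$ as an indicator sum. Since $c$ is a proper $k$-coloring of all of $G$, every back-neighbor of $v$ in $\pi$ is colored, so
\[
X_\pi(c,v) = \sum_{i=1}^{k} \mathbf{1}\bigl[\text{no neighbor of $v$ with color $i$ appears before $v$ in $\pi$}\bigr].
\]
Taking expectations over a uniform random ordering $\pi$ and using linearity,
\[
\mathbb{E}_\pi[X_\pi(c,v)] = \sum_{i=1}^{k} \Pr_\pi\bigl[v \text{ precedes all its color-$i$ neighbors in $\pi$}\bigr].
\]
The event inside the probability depends only on the relative order of $v$ and its $c_i(v)$ neighbors colored $i$; by symmetry, $v$ is first among these $c_i(v)+1$ vertices with probability exactly $1/(c_i(v)+1)$. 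Summing over $i$ gives $\mathbb{E}_\pi[X_\pi(c,v)] = W(c,v)$, completing the proof.

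There is no real obstacle here: the argument is just Jensen plus a one-line symmetry calculation, and the only thing to check carefully is that back-neighbors of $v$ are indeed all colored (which is automatic because $c$ is a full $k$-coloring), so that $X_\pi(c,v)$ really does count exactly the colors missing among color-neighbors lying to the left of $v$.
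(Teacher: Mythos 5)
Your proof is correct and follows essentially the same route as the paper: Jensen (AM--GM) to bound the geometric mean by $\mathbb{E}_\pi[X_\pi(c,v)]$, then a per-color symmetry argument computing this arithmetic mean exactly as $W(c,v)$. The only cosmetic difference is that you decompose $X_\pi$ directly as a sum of ``color $i$ missing'' indicators, whereas the paper writes $X_\pi = k - Z_\pi$ and decomposes $Z_\pi$ into ``color $i$ present'' indicators; these are trivially equivalent.
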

\begin{proof}
Note that $X_{\pi}(c,v)=k-Z_{\pi}(c,v)$, where $Z_{\pi}(c,v)$ is the
number of colors of $c$ that appear in $N^{-}_{\pi}(v)$.
By the AM--GM inequality and linearity of expectation, 
\begin{align*}
T(c,v) & = \exp(\mathbb{E}_{\pi}[\log X_{\pi}(c,v)])\\
 & \le \mathbb{E}_{\pi}[X_{\pi}(c,v)]\\
 & = k-\mathbb{E}_{\pi}[Z_{\pi}(c,v)]\\
 & = k-\sum_{i\le k}\mathbb{E}_{\pi}[Z_{\pi}^{i}(c,v)],
\end{align*}
where $Z_{\pi}^{i}(c,v)$ is the indicator random variable for the event that color $i$ appears in $N^{-}_{\pi}(v)$. 

But for each individual color $i$, $\Pr[Z_{\pi}^{i}(c,v)]$ is 
the probability that $v$ comes after at least one of the $c_{i}(v)$ vertices of
color $i$ in its neighborhood. In a uniform random ordering $\pi$, the probability that $v$ appears before all $c_i(v)$ of these vertices is just $\frac{1}{c_{i}(v)+1}$. Thus, 
\[
T(c,v) \le k-\sum_{i\le k}\Big(1-\frac{1}{c_{i}(v)+1}\Big) = \sum_{i\le k}\frac{1}{c_{i}(v)+1} = W(c,v),
\]
as desired. 
\end{proof}

\section{Setup}\label{sec:Setup}

We outline the proof of Theorem \ref{thm:main}.
Our strategy is to control the size of a minimal counterexample
to Tomescu's conjecture (equivalently, to induct on the number of vertices $n$ in $G$).

Suppose Theorem \ref{thm:main} is false for a certain $k\ge 4$. Fix this $k$ henceforth. Let $n$ be the minimum number of vertices in a counterexample to Theorem~\ref{thm:main}, so that $n > k$. Note that such a counterexample $G$ either satisfies (\ref{eq:tomescu-conj}) strictly, or else 
\[
P_G(k) = k!(k-1)^{n-k}
\]
and the $2$-core of $G$ is not a $k$-clique.

Define a graph $G=(V,E)$ to be {\it bad} if $\chi(G)=k$, $|V|=n$, and $G$ is a minimal counterexample to Theorem~\ref{thm:main}, i.e. every proper subgraph of $G$ satisfies Theorem~\ref{thm:main}.

Our proof breaks into three parts. First, we use a straightforward edge-counting argument to show that there are no bad graphs for $n\le 2k-2$. Then, we show using the Overprediction Lemma that any bad graph with $k \ge 5$ satisfies $n\le 2k-2$. Finally, for $k = 4$ we modify the argument to show that $n\le 7$, which leaves exactly the case $k=4$, $n=7$ to check by hand.

Recall that a graph $G$ is {\it $k$-critical} if $\chi(G)=k$ and deleting any vertex or edge reduces its chromatic number.

\begin{lem}
Every bad graph is $k$-critical.
\begin{proof}
Suppose $G$ is bad but not $k$-critical, and there is an edge $e$ for which $G\backslash e$ is still $k$-chromatic. Since $G$ is a minimal counterexample to Theorem \ref{thm:main} with respect to edge deletion, if $G\backslash e$ is connected, then it satisfies
\[
P_{G\backslash e}(k) \le k!(k-1)^{n-k}.
\]

But every $k$-coloring of $G$ is a $k$-coloring of $G\backslash e$, so $G$ also satisfies Theorem \ref{thm:main}, which is a contradiction. 

If $G\backslash e$ is disconnected, let the two components be the induced subgraphs $G[V_1]$ and $G[V_2]$ for some vertex partition $V=V_1 \sqcup V_2$. Since $e$ is the only edge between these two components in $G$ and $\chi(G) = k$, one of the two components (say $G[V_1]$) has chromatic number $k$. Since $G[V_2]$ is connected, every $k$-coloring of $G[V_1]$ extends to at most $(k-1)^{|V_2|}$ $k$-colorings of $G$, with equality if and only if $V_2$ is a tree. Also, by the minimality of $G$,
\[
P_{G_1}(k) \le k!(k-1)^{|V_1| - k},
\]
with equality if and only if the $2$-core of $G[V_1]$ is a $k$-clique. Thus,
\[
P_G(k) \le P_{G_1}(k)\cdot(k-1)^{|V_2|} \le k!(k-1)^{n-k},
\]
with equality if and only if the $2$-core of $G[V_1]$ is a $k$-clique and $G[V_2]$ is a tree. But then the $2$-core of $G$ is a $k$-clique, so $G$ is not a counterexample to Theorem \ref{thm:main}. This is a contradiction and completes the proof. 
\end{proof}
\end{lem}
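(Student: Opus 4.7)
The plan is to argue by contradiction. Suppose $G$ is bad but not $k$-critical. First, I would reduce to the case where there is an edge $e=uv$ with $\chi(G\setminus e)=k$: since $G\setminus v \subseteq G\setminus e$ for any edge $e$ incident to $v$, a vertex whose deletion preserves the chromatic number automatically yields such an edge. Then I would split into two subcases according to whether $G\setminus e$ remains connected or falls into two components.

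In the connected subcase, I would apply minimality of $G$ directly to the proper subgraph $G\setminus e$ (which has the same $n$ vertices and chromatic number $k$), obtaining $P_{G\setminus e}(k)\le k!(k-1)^{n-k}$. Since every $k$-coloring of $G$ is also a $k$-coloring of $G\setminus e$, this gives $P_G(k)\le P_{G\setminus e}(k)\le k!(k-1)^{n-k}$, the required inequality. For the equality half of Theorem~\ref{thm:main}, I would argue that $P_G(k)=k!(k-1)^{n-k}$ forces $P_G(k)=P_{G\setminus e}(k)$, so every $k$-coloring of $G\setminus e$ assigns different colors to $u$ and $v$, and simultaneously the $2$-core of $G\setminus e$ must already be $K_k$; a short inspection of the $K_k$-plus-trees structure of $G\setminus e$ will then show that $u,v$ can in fact be made to share a color in some coloring unless $e$ is already an edge of $G\setminus e$, contradicting the existence of $e$.

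In the disconnected subcase, I would write $G\setminus e = G[V_1]\sqcup G[V_2]$, with at least one component (say $G[V_1]$) retaining chromatic number $k$, and apply minimality to $G[V_1]$ to get $P_{G[V_1]}(k)\le k!(k-1)^{|V_1|-k}$, with equality iff the $2$-core of $G[V_1]$ is $K_k$. To bound extensions to $V_2$, I would run a BFS in $G[V_2]$ from the endpoint of $e$ in $V_2$: that root has at most $k-1$ valid colors (avoiding the color of its partner in $V_1$), and each BFS-descendant has at most $k-1$ valid colors (avoiding its BFS parent), giving at most $(k-1)^{|V_2|}$ extensions, with equality iff $G[V_2]$ is a tree. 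Multiplying yields $P_G(k)\le k!(k-1)^{n-k}$, and in the equality case $V_2$ is a tree attached to $V_1$ through the single edge $e$, so the $2$-core of $G$ coincides with that of $G[V_1]$, namely $K_k$. Either way $G$ satisfies Theorem~\ref{thm:main}, contradicting badness.

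I expect the main obstacle to be the equality case in the connected subcase: the inequality itself is immediate, but ruling out the scenario in which $P_G(k)=k!(k-1)^{n-k}$ yet the $2$-core of $G$ differs from $K_k$ requires a careful structural argument that exploits the rigidity of the $K_k$-plus-trees description of $G\setminus e$ forced by the equality condition.
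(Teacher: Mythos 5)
Your proposal is correct and follows essentially the same structure as the paper's proof: reduce to an edge $e=uv$ whose deletion preserves the chromatic number, split on whether $G\setminus e$ is connected, and apply minimality of $G$ in each subcase. Your treatment of the disconnected subcase matches the paper's almost exactly, including the BFS extension bound and the equality characterization.

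Where you differ is in the connected subcase. The paper's own write-up is terse: it states $P_G(k)\le P_{G\setminus e}(k)\le k!(k-1)^{n-k}$ and then asserts ``so $G$ also satisfies Theorem~\ref{thm:main},'' without explicitly addressing what happens when equality holds. Since a bad graph could be a counterexample precisely because $P_G(k)=k!(k-1)^{n-k}$ yet its $2$-core is not $K_k$, the inequality alone does not immediately finish the argument, and indeed attaching the extra edge $e$ to a $K_k$-plus-trees graph typically enlarges the $2$-core. Your proposal fills this in correctly: equality forces $P_G(k)=P_{G\setminus e}(k)$, hence every $k$-coloring of $G\setminus e$ separates $u$ and $v$, while minimality forces $G\setminus e$ to be $K_k$ with trees attached; a non-edge in such a graph can always be monochromatic in some coloring (at least one endpoint is a tree vertex, and one can choose the BFS coloring so that its parent avoids $c(u)$), a contradiction. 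So your version is, if anything, a slightly more careful rendering of the argument the paper intends. The main thing to watch is that the ``short inspection'' you defer does require a small case split (whether $u$ is an ancestor/descendant of $v$, whether $u$ lies in the clique, etc.), but it is routine and uses only $k\ge 3$.
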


We will need some well-known structural results about $k$-critical graphs. It is a basic fact that the minimum degree of a $k$-critical graph $G$ is at least $k-1$. We will also need a classical result of Brooks. 

\begin{lem}[Brooks' Theorem \cite{Br}]
\label{lem:degk}  If $k\ge 4$, a $k$-critical graph that is not the $k$-clique has at least one vertex of degree at least $k$.
\end{lem}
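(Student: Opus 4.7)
The plan is to derive the lemma directly from Brooks' Theorem, which states that every connected graph $H$ with maximum degree $\Delta(H)$ satisfies $\chi(H) \le \Delta(H)$, with the sole exceptions being complete graphs and odd cycles (for which $\chi(H) = \Delta(H)+1$). Since the hypothesis is that $G$ is $k$-critical but not the $k$-clique, the desired conclusion $\Delta(G) \ge k$ should fall out almost immediately after ruling out the exceptional cases.

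First I would verify that $G$ is connected, so that Brooks' Theorem applies. This is a standard consequence of $k$-criticality: if $G$ were disconnected, some connected component would already have chromatic number $k$, and deleting a vertex from any other component would leave $\chi(G)$ unchanged, contradicting minimality with respect to vertex deletion.

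Next I would argue by contradiction. Suppose every vertex of $G$ has degree at most $k-1$, i.e.\ $\Delta(G) \le k-1$. Brooks' Theorem then produces three scenarios, each of which leads to a contradiction. In the generic case it gives $\chi(G) \le \Delta(G) \le k-1$, conflicting with $\chi(G) = k$. In the first exceptional case, $G$ is a complete graph, which combined with $\chi(G) = k$ forces $G = K_k$, contradicting the hypothesis that $G$ is not the $k$-clique. In the second exceptional case, $G$ is an odd cycle, so $\chi(G) = 3$, contradicting $k \ge 4$. Hence $G$ must contain a vertex of degree at least $k$.

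There is no real obstacle here: the lemma is essentially a repackaging of Brooks' Theorem specialized to $k$-critical graphs, with the hypothesis $k \ge 4$ doing nothing more than killing the odd-cycle exception. All the substantive work is contained in Brooks' original proof, cited as \cite{Br}.
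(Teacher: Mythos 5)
Your derivation is correct and is exactly the standard deduction of this lemma from Brooks' Theorem; the paper itself states the result without proof, citing \cite{Br}, so there is no competing argument to compare against. Your handling of the exceptional cases (complete graph and odd cycle) and the connectedness of $k$-critical graphs is precisely the implicit reasoning the paper expects the reader to supply.
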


The structure of $k$-critical graphs has been studied extensively, and much more is now known -- see for example Gallai \cite{Ga1, Ga2}, Dirac \cite{Di}, and Kostochka and Yancey \cite{KoYa1,KoYa2}.

We also use a general fact about $k$-colorings of graphs using the minimum possible number of colors. 

\begin{lem}
\label{lem:radiant} For every graph $G=(V,E)$ of chromatic number $k$ and for every proper $k$-coloring
$c$ of $G$, there are $k$ vertices $v_{1},\ldots,v_{k} \in V$ such that, for each $i\le k$, $c(v_{i})=i$ and $\{v_{i}\}\cup N(v_{i})$ contains a vertex
of every color. 
\end{lem}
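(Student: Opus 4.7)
The plan is to prove the contrapositive for each color class separately: if some color class of $c$ contains no such vertex, then $G$ admits a proper $(k-1)$-coloring, contradicting $\chi(G)=k$. Call a vertex $v$ \emph{radiant} if $\{v\} \cup N(v)$ meets every color class of $c$. It suffices to show that for each color $i \in [k]$, at least one vertex of color $i$ is radiant; then pick any such vertex as $v_i$.

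Fix a color $i$ and suppose for contradiction that no vertex of color $i$ is radiant. Then for every vertex $v$ with $c(v)=i$ there is a color $j_v \in [k] \setminus \{i\}$ such that no neighbor of $v$ is colored $j_v$. Define a new coloring $c'$ by
\[
c'(v) = \begin{cases} j_v & \text{if } c(v) = i, \\ c(v) & \text{otherwise.} \end{cases}
\]
I expect to verify in one paragraph that $c'$ is a proper coloring of $G$ using only the $k-1$ colors in $[k]\setminus\{i\}$. The three cases to check on an edge $uv$ are (a) both endpoints originally not colored $i$, where $c'$ agrees with $c$ and properness is inherited; (b) both endpoints originally colored $i$, which does not occur because $c$ is proper; and (c) exactly one endpoint, say $v$, originally colored $i$, in which case $c'(v) = j_v$ was chosen precisely so that no neighbor of $v$ (in particular $u$) has color $j_v$, so $c'(u) = c(u) \ne j_v = c'(v)$.

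Since $c'$ uses only $k-1$ colors, it contradicts $\chi(G) = k$. Hence for each $i$ there is a radiant vertex $v_i$ of color $i$, as required. There is no substantive obstacle here; the entire argument is a one-shot simultaneous recoloring of a single color class, which works because the color class is independent and the chosen replacement color at each vertex avoids its neighborhood by construction.
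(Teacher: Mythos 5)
Your proof is correct and takes essentially the same approach as the paper: both fix a color $i$, assume no vertex of color $i$ is radiant, recolor each such vertex with a color missing from its neighborhood, and observe that the result is a proper $(k-1)$-coloring because color class $i$ is an independent set. The only difference is that you spell out the edge-by-edge verification of properness, which the paper leaves implicit.
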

\begin{proof}
Suppose there is no vertex of some color $i$ with every color in $[k]\backslash {i}$ appearing in its neighborhood.
Then, for every vertex $v$ with $c(v)=i$, there is some color $j\ne i$ not appearing in $N(v)$, and we can change the color of $v$ to $j$. Since the vertices of color $i$ in $k$-coloring $c$ form an independent set, these changes can be made simultaneously to create a proper $(k-1)$-coloring of $G$ not using color $i$. This shows $\chi(G)\le k-1$, which is a contradiction.
\end{proof}

Thus, to every $k$-coloring $c$ of $G$, we can associate a set
of $k$ vertices. There is one of each color, and each sees all the other colors
in its neighborhood. We call these $k$ vertices the \emph{radiant}
vertices of $k$-coloring $c$. When there are multiple vertices of the same color with this property, we arbitrarily fix a single one of them to call radiant. If a vertex $v$ is radiant in $k$-coloring $c$, then we say $v$ is \emph{$c$-radiant}.

Finally, for bad graphs specifically, we show that each pair of vertices receive the same color in less than a fraction $1/(k-1)$ of all $k$-colorings.
\begin{lem}
\label{lem:bad-probability}If $G$ is a bad graph, and
$u,v$ are any two distinct vertices of $G$, then 
\begin{equation*}
\Pr[c(u)=c(v)]<\frac{1}{k-1},
\end{equation*}
where the probability is over a uniform choice of a $k$-coloring
$c$ of $G$.
\end{lem}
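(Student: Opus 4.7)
The plan is to interpret $\Pr[c(u)=c(v)]$ as the ratio $P_{G/uv}(k)/P_G(k)$ of chromatic polynomials, and to apply Theorem \ref{thm:main} to the contraction $G/uv$ using minimality of $n$. The case $uv\in E$ is trivial, and the weak inequality $\Pr[c(u)=c(v)]\le 1/(k-1)$ will follow almost immediately from minimality. The main obstacle will be ruling out the extremal case so as to obtain \emph{strict} inequality.

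If $uv\in E$, then every proper coloring has $c(u)\ne c(v)$ and the probability equals zero. Otherwise, let $G/uv$ denote the simple graph obtained by identifying $u$ and $v$; proper $k$-colorings of $G$ with $c(u)=c(v)$ correspond bijectively to proper $k$-colorings of $G/uv$, so $\Pr[c(u)=c(v)]=P_{G/uv}(k)/P_G(k)$. I would first check that $G/uv$ has $n-1$ vertices, is connected (any walk in $G$ descends to a walk in $G/uv$), and satisfies $\chi(G/uv)\ge k$, since any $\ell$-coloring of $G/uv$ lifts to one of $G$ by coloring both $u$ and $v$ with the color of the merged vertex (legal because $u\not\sim v$). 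If $\chi(G/uv)>k$ the numerator vanishes; otherwise $\chi(G/uv)=k$, and minimality of $n$ lets me invoke Theorem \ref{thm:main} on $G/uv$ to obtain $P_{G/uv}(k)\le k!(k-1)^{n-1-k}$. Since $G$ is a counterexample, $P_G(k)\ge k!(k-1)^{n-k}$, and dividing yields $\Pr[c(u)=c(v)]\le 1/(k-1)$.

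For the strict inequality, I would analyze the extremal case in which both of the above bounds are simultaneously tight. Tightness of the first forces the $2$-core of $G/uv$ to be a $k$-clique. Since $G$ is $k$-critical (by the preceding lemma), its minimum degree is at least $k-1$; contracting $uv$ reduces any vertex's degree by at most one, so $G/uv$ already has minimum degree at least $k-2\ge 2$. Hence the $2$-core of $G/uv$ equals $G/uv$ itself, forcing $G/uv=K_k$ and thus $n=k+1$. A simple degree count then shows that each of $u$ and $v$ is adjacent to all $k-1$ other vertices (using $\deg_G u,\deg_G v\ge k-1$ and the fact that $u\not\sim v$), so $G=K_{k+1}\setminus\{uv\}$. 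But a direct computation gives $P_G(k)=k! < k!(k-1)=k!(k-1)^{n-k}$, so $G$ satisfies Tomescu's conjecture strictly and is not a counterexample, contradicting that $G$ is bad. Hence equality cannot hold, and the strict inequality follows.
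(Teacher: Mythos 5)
Your proposal is correct and follows essentially the same route as the paper: contract $u$ and $v$, use minimality of $n$ to bound $P_{G/uv}(k)$, and then rule out the equality case by showing $G/uv$ has minimum degree at least $2$ so that it would have to equal $K_k$. The only minor difference is the final contradiction: the paper observes that $G$ would then properly contain $K_k$ (contradicting $k$-criticality), while you identify $G=K_{k+1}\setminus\{uv\}$ via a degree count and compute $P_G(k)=k!<k!(k-1)$ directly; both are valid and equally short.
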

\begin{proof}
Suppose this is not true for some pair $u,v$. Note that $u,v$ are not adjacent as otherwise $u$ and $v$ must receive different colors and $\Pr[c(u)=c(v)]=0$. Let $G'$ be the graph obtained from $G$ by contracting $u$ and $v$ together. That is, the two vertices $u$ and $v$ are replaced by a single vertex whose neighborhood is the union $N(u) \cup N(v)$. The $k$-colorings of $G'$ are in bijection with the $k$-colorings of $G$ that assign $u$ and $v$ the same color. Thus, 
\[ 
P_{G'}(k) = \Pr\big[c(u)=c(v)\big]P_{G}(k) \ge \frac{1}{k-1}P_{G}(k) \ge k!(k-1)^{|V|-k-1}
\]
since $G$ itself contradicts Theorem \ref{thm:main}. Note that $G'$ is connected because $G$ is connected. 

Thus, either the $2$-core of $G'$ is a $k$-clique or $G'$ is a counterexample to Theorem \ref{thm:main} with fewer vertices. The latter assumption contradicts the minimality of $G$, so the $2$-core of $G'$ is a $k$-clique.

Since $G$ is bad, $G$ is $k$-critical and has minimum degree at least $k-1\ge3$. But a single contraction reduces the degree of each vertex by at most $1$, so the minimum degree of $G'$ is at least $2$. Thus, $G'$ is its own $2$-core and must be exactly $K_k$. 

This is impossible. Indeed, it would have to be the case that $G\backslash \{u,v\}$ is a $(k-1)$-clique, and since $\chi(G)=k$ at least one of $u$ and $v$ must be complete to this $(k-1)$-clique. But then $G$ would contain a $k$-clique as a proper subgraph, and would not be $k$-critical.
\end{proof}

The next lemma is a straightforward consequence of Lemma \ref{lem:bad-probability}.   
\begin{lem}
\label{lem:subset-pairs} Let $G=(V,E)$ be a bad graph. For any subset $U\subset V$, if $P(c,U)$
is the number of pairs ${u,v}$ of distinct vertices in $U$ for which $c(u)=c(v)$, then 
\[
\mathbb{E}_{c}[P(c,U)]<\frac{1}{k-1}\binom{|U|}{2}.
\]
\end{lem}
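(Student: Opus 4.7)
The plan is to write $P(c,U)$ as a sum of pair indicators and apply Lemma~\ref{lem:bad-probability} termwise. Specifically, I would begin by expressing
\[
P(c,U) \;=\; \sum_{\{u,v\}\subset U} \mathbf{1}\bigl[c(u)=c(v)\bigr],
\]
where the sum is over the $\binom{|U|}{2}$ unordered pairs of distinct vertices in $U$. Taking expectation over a uniformly random $k$-coloring $c$ of $G$ and invoking linearity of expectation yields
\[
\mathbb{E}_c[P(c,U)] \;=\; \sum_{\{u,v\}\subset U} \Pr\bigl[c(u)=c(v)\bigr].
\]

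Now I would apply Lemma~\ref{lem:bad-probability}, which guarantees that for every pair of distinct vertices $u,v$ in the bad graph $G$, $\Pr[c(u)=c(v)] < 1/(k-1)$. Summing these strict inequalities over the $\binom{|U|}{2}$ pairs in $U$ produces the desired strict bound. The only mild subtlety to flag is that the argument requires at least one pair to sum over, so one should assume $|U|\ge 2$; when $|U|\le 1$ both sides of the claimed inequality are zero and the statement is vacuous (or can be read as $0 \le 0$), so there is nothing to prove in that corner case. I do not anticipate any real obstacle: the lemma is exactly linearity of expectation plus the per-pair bound already established, which is presumably why the authors call it ``straightforward.''
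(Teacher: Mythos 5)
Your proof is correct and matches the paper's own argument exactly: both express $P(c,U)$ as a sum of pair indicators, apply linearity of expectation, and invoke Lemma~\ref{lem:bad-probability} termwise. Your aside about the $|U|\le 1$ corner case is a reasonable observation (the strict inequality fails there, so the lemma is implicitly read with $|U|\ge 2$, which is all the paper ever uses), but otherwise there is nothing to add.
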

\begin{proof}
For every pair $(u,v)$ of vertices in $U$, the probability they
are the same color in a random $k$-coloring of $G$ is less than $\frac{1}{k-1}$
by Lemma \ref{lem:bad-probability}. The stated inequality follows by linearity of expectation. 
\end{proof}

In general, we will apply Lemma \ref{lem:subset-pairs} when $U=V$ or when $U\subseteq N(v)$ for some vertex $v$ and $|U| = k-1$.

We describe an immediate application of Lemma \ref{lem:bad-probability}, showing there are no small bad graphs.

\begin{lem}\label{lem:edge-count}
If there exists a bad graph on $n$ vertices, then $n \ge 2k-1$.
\end{lem}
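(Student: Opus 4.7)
The plan is to assume for contradiction that a bad graph $G$ on $n$ vertices exists with $n\le 2k-2$ (recalling $n>k$, since the $k$-clique $K_k$ is not bad), and to estimate the expected number of monochromatic pairs $\{u,v\}$ with $c(u)=c(v)$ under a uniform random proper $k$-coloring $c$ in two different ways.

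For the lower bound, I would use that every $k$-coloring of the $k$-chromatic graph $G$ must use all $k$ colors, so its color classes have sizes $n_1,\ldots,n_k\ge 1$ summing to $n$. A simple convexity argument---concretely, $(n_i-1)^2\ge n_i-1$ for every non-negative integer $n_i$---then gives $\sum_i \binom{n_i}{2}\ge n-k$ in every coloring, with equality exactly when each class has size $1$ or $2$.

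For the upper bound, I would apply Lemma \ref{lem:bad-probability} individually to each pair of distinct vertices: the contribution is $0$ for each edge and strictly less than $1/(k-1)$ for each non-edge, so linearity of expectation gives
\[
\mathbb{E}_c\Bigl[\sum_i \binom{n_i}{2}\Bigr] < \frac{\binom{n}{2}-|E|}{k-1},
\]
with strict inequality as long as $G$ has at least one non-adjacent pair, which holds since $n>k$ forces $G\ne K_n$. Combining the two bounds yields $|E|<\binom{n}{2}-(k-1)(n-k)$. On the other hand, $G$ is $k$-critical, so its minimum degree is at least $k-1$ and $|E|\ge n(k-1)/2$. Plugging these two edge estimates together and clearing denominators reduces, after routine algebra, to the strict inequality $(n-k)(n-2(k-1))>0$. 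Since $n>k$, this forces $n>2(k-1)$, i.e., $n\ge 2k-1$, contradicting the assumption $n\le 2k-2$.

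The main (essentially the only) obstacle is purely bookkeeping: verifying that the resulting quadratic in $n$ factors as $(n-k)(n-2k+2)$, which follows at once from the observation that its discriminant is the perfect square $(k-2)^2$. Notably, no extra structural input such as Brooks' theorem or the radiant-vertex lemma seems to be required for this step.
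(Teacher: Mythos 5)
Your argument is correct, and it follows the same high-level strategy as the paper: double-count the expected number of monochromatic pairs under a random $k$-coloring, using Lemma \ref{lem:bad-probability} for the upper bound and convexity of $\binom{\cdot}{2}$ (together with the fact that all $k$ colors are used) for the lower bound, to obtain $|E| < \binom{n}{2} - (k-1)(n-k)$. The key difference is in the edge-count lower bound you feed in. The paper invokes Gallai's theorem, which states that a $k$-critical graph on $n$ vertices with $k < n \le 2k-1$ has at least $\frac{k-1}{2}n + \frac{(n-k)(2k-n)}{2} - 1$ edges, and then checks that the resulting quadratic $n^2 + (1-3k)n + (2k^2-k+1) > 0$ fails for $4 \le k$ and $k < n \le 2k-2$. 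You instead use only the elementary fact that a $k$-critical graph has minimum degree at least $k-1$, giving $|E| \ge n(k-1)/2$, which combined with the upper bound yields the cleaner $(n-k)(n-2k+2) > 0$ and hence $n \ge 2k-1$ immediately. Your algebra checks out (the discriminant of $n^2-(3k-2)n+2k(k-1)$ is indeed $(k-2)^2$, giving roots $k$ and $2k-2$). So your version is a genuine simplification: it avoids citing Gallai's theorem and avoids the arithmetic verification at the end, at no cost. One small stylistic note: although you frame it as a contradiction, you never actually use the hypothesis $n \le 2k-2$; you derive $n \ge 2k-1$ directly, so the contradiction wrapper is superfluous.
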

\begin{proof}
Suppose there is a bad graph $G=(V,E)$ with $\chi(G)=k\ge 4$ and $|V|=n\le 2k-2$.
Define $P(c)=P(c,V)$ to be the total number of unordered pairs $u\ne v \in V$ for which $c(u)=c(v)$.

Applying Lemma \ref{lem:bad-probability} to every pair $u \not\sim v$ of nonadjacent vertices in $G$ and summing over all such pairs, we find by linearity of expectation that
\begin{equation}\label{eq:pairs-upper}
\mathbb{E}_c[P(c)] < \frac{1}{k-1} \Big(\binom{n}{2} - |E|\Big).
\end{equation}

On the other hand, $P(c)=\sum_{i} \binom{c_i}{2}$, where $c_i$ is the size of the $i$-th color class in $k$-coloring $c$. As $\sum_i c_i = n \le 2k-2$ and each $c_i$ is a nonnegative integer, convexity of the binomial coefficient implies 
\begin{equation}
P(c) = \sum_{i\le k} \binom{c_i}{2} \ge (n-k) \binom{2}{2} + (2k-n)\binom{1}{2} = n-k. \label{eq:pairs-lower}
\end{equation}

Comparing (\ref{eq:pairs-upper}) to (\ref{eq:pairs-lower}), we have
\[
n-k < \frac{1}{k-1} \Big(\binom{n}{2} - |E|\Big),
\]
which simplifies to
\begin{equation}\label{eq:edge-bound}
|E| < \binom{n}{2} - (n-k)(k-1) = \frac{n^2+n}{2} + k^2 - kn - k.
\end{equation}

Since $G$ is bad, it has at least $k+1$ vertices. By an old result of Gallai \cite{Ga2}, the number of edges of any $k$-critical graph on $n$ vertices with $k < n\le 2k-1$ is at least
\begin{equation}\label{eq:gallai}
\frac{k-1}{2} n + \frac{(n-k)(2k-n)}{2} - 1.
\end{equation} 

Comparing (\ref{eq:edge-bound}) with (\ref{eq:gallai}), it follows that
\[
n^2 + (1-3k) n + (2k^2 - k + 1) > 0,
\]
and it is easy to check that this inequality does not hold whenever $k\ge 4$ and $n \le 2k-2$.
\end{proof}

\section{A Linear Program \label{sec:A-Linear-Program}}

We reduced upper bounding the number of $k$-colorings of $G$
to estimating the geometric mean of the quantity $W(c,v)$ defined by (\ref{eq:defW}), which depends
only on the integer partition $\deg(v)=c_{1}(v)+c_{2}(v)+\cdots+c_{k}(v)$
of $\deg(v)$ into the sizes of the color classes within $N(v)$. We now
solve a linear program that we will use to bound $W(c,v)$ in the next section.

\begin{lem}
\label{lem:lin-program}Suppose $x\ge k\ge4$ are positive integers,
$S\in[k-1,3k-3]$ is a real number, and $a_{j}$, for $j\ge0$, are nonnegative
real numbers for which 
\begin{align}
\sum_{j\ge 0}a_{j} & = x \label{eq:linprog-1}\\
\sum_{j\ge 0}ja_{j} & = k-1 \label{eq:linprog-2}\\
\sum_{j\ge 0}j^{2}a_{j} & \le S. \label{eq:linprog-3}
\end{align}

Then, 
\begin{equation}\label{eq:W-piecewise-max}
\sum_{j\ge 0}\frac{1}{j+1}a_{j}\le\begin{cases}
x-\frac{4(k-1)-S}{6} & \text{if }S\le2k-2\\
x-\frac{6(k-1)-S}{12} & \text{if }S>2k-2.
\end{cases}
\end{equation}
\end{lem}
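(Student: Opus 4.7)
The plan is to recognize this as a linear program in the variables $(a_j)$ and exhibit two dual certificates: quadratic polynomials $P(j) = \alpha + \beta j + \gamma j^2$ with $\gamma \ge 0$ that satisfy $P(j) \ge \frac{1}{j+1}$ for every nonnegative integer $j$. Given such a $P$, summing the pointwise inequality against $a_j$ and invoking (\ref{eq:linprog-1})--(\ref{eq:linprog-3}) (using $\gamma \ge 0$ so that the inequality constraint on the second moment is passed through in the correct direction) yields the clean bound
\[
\sum_{j \ge 0} \frac{a_j}{j+1} \le \alpha x + \beta (k-1) + \gamma S.
\]
The two pieces of (\ref{eq:W-piecewise-max}) will arise from two different choices of $P$.

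The key conceptual step is picking the right two quadratics. LP duality predicts that an extremal feasible $(a_j)$ is supported on at most three points, and to maximize $\sum a_j/(j+1)$ the mass should concentrate near $j=0$, where $1/(j+1)$ is largest. When $S$ is small, the optimal support is $\{0,1,2\}$; once $S$ exceeds $2(k-1)$, the moment constraints force the distribution to skip over $j=1$, and the optimal support becomes $\{0,2,3\}$. This motivates taking the Lagrange interpolants of $1/(j+1)$ through each of these three-point sets:
\[
P_1(j) = 1 - \tfrac{2j}{3} + \tfrac{j^2}{6}, \qquad P_2(j) = 1 - \tfrac{j}{2} + \tfrac{j^2}{12}.
\]

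Verification then reduces to a single line of algebra: multiplying through by $j+1$ gives
\[
(j+1) P_1(j) - 1 = \tfrac{1}{6}\, j(j-1)(j-2), \qquad (j+1) P_2(j) - 1 = \tfrac{1}{12}\, j(j-2)(j-3),
\]
and both right-hand sides are nonnegative for every nonnegative integer $j$ (vanishing at the intended interpolation points and positive elsewhere). Substituting back, $P_1$ gives $\sum a_j/(j+1) \le x - \frac{4(k-1)-S}{6}$ and $P_2$ gives $\sum a_j/(j+1) \le x - \frac{6(k-1)-S}{12}$. The two bounds coincide at $S = 2(k-1)$ and each is the tighter of the two in its respective range, reproducing the piecewise formula in (\ref{eq:W-piecewise-max}).

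The only real obstacle is guessing the right interpolation sets $\{0,1,2\}$ and $\{0,2,3\}$; once these are identified, the polynomial identities above make the proof essentially mechanical. No use of the integrality of $x$ or the lower bound $x \ge k$ is needed beyond ensuring the feasible set is nonempty — the bound is purely a consequence of the moment constraints.
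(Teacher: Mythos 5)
Your proof is correct, and it takes a genuinely different route from the paper's. The paper argues on the primal side: it invokes the existence of a basic feasible solution to conclude that some maximizer has support of size at most three, shows $a_0>0$ must hold, solves the resulting $3\times3$ system explicitly in terms of the remaining support $\{0,j_0,j_1\}$, and then checks that $(j_0,j_1)=(1,2)$ or $(2,3)$ is optimal according to whether $S/(k-1)$ lies in $[1,2]$ or $[2,3]$. You instead argue on the dual side, exhibiting the two quadratic certificates $P_1,P_2$ majorizing $1/(j+1)$ on the nonnegative integers and verifying this by the factorizations $(j+1)P_1(j)-1=\tfrac{1}{6}j(j-1)(j-2)$ and $(j+1)P_2(j)-1=\tfrac{1}{12}j(j-2)(j-3)$. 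I checked both identities and the resulting bounds $\alpha x+\beta(k-1)+\gamma S$; they match \eqref{eq:W-piecewise-max}, and the sign condition $\gamma\ge0$ is correctly used to handle the inequality constraint \eqref{eq:linprog-3}. Your version is shorter, avoids citing the basic-feasible-solution theorem (which is slightly delicate here because there are infinitely many variables $a_j$), and does not actually use the hypotheses $x\ge k$ or $S\le 3k-3$, so it proves a marginally stronger statement; the trade-off is that the two interpolation sets $\{0,1,2\}$ and $\{0,2,3\}$ appear as a guess, whereas the paper's primal computation derives them. As written, both bounds hold for all $S$, and the piecewise formula simply selects the tighter one on each range (they agree at $S=2(k-1)$), which you note correctly.
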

\begin{proof}
Write $W = \sum_{j\ge 0} {\frac{1}{j+1} a_j}$.

The maximum value of $W$ will always be attained when (\ref{eq:linprog-3}) achieves equality. Otherwise, subtract $2\varepsilon$ from some nonzero $a_{j},j\ge1$, and add $\varepsilon$ to each of $a_{j-1},a_{j+1}$. For a small enough $\varepsilon >0$, this operation maintains the constraints (\ref{eq:linprog-1}), (\ref{eq:linprog-2}), and (\ref{eq:linprog-3}) while increasing $W$.

It is a standard fact that for a linear program with $n$ variables whose set of feasible solutions is nonempty and bounded, there exists an optimal solution for which some set of $n$ linearly independent constraints are exactly equality. Such solutions are called {\it basic feasible solutions} (see for example Theorems 2.3 and 2.7 of \cite{BeTs}). For the linear program (\ref{eq:linprog-1})-(\ref{eq:linprog-3}) in question, there are only three constraints other than the nonnegativity constraints $a_j \ge 0$. Thus, some optimal solution has at most three nonzero $a_j$. We assume henceforth that there are at most three distinct values of $j$ with $a_j \ne 0$.

Subtracting (\ref{eq:linprog-2}) from (\ref{eq:linprog-1}), we see that $a_{0}>0$, so there are at most
two nonzero $a_{j}$ with $j > 0$. Suppose $0<j_{0}<j_{1}$ are two indices such
that if $j\not\in\{0,j_{0},j_{1}\}$ then $a_{j}=0$. The three constraints are linearly independent and uniquely determine the values
of $a_{0},a_{j_{0}},a_{j_{1}}$:
\begin{align*}
a_{0}+a_{j_{0}}+a_{j_{1}} & = x\\
j_{0}a_{j_{0}}+j_{1}a_{j_{1}} & = k-1\\
j_{0}^{2}a_{j_{0}}+j_{1}^{2}a_{j_{1}} & = S.
\end{align*}

The unique solution to this system is

\[
(a_0, a_{j_0}, a_{j_1}) = \Big(x-a_{j_{0}}-a_{j_{1}}, \frac{(k-1)j_{1}-S}{j_{0}j_{1}-j_{0}^{2}}, \frac{S-(k-1)j_{0}}{j_{1}^{2}-j_{0}j_{1}}\Big).
\]

As we require $a_{j_0}, a_{j_1} \ge 0$, the solution exists whenever $j_{0}\le\frac{S}{k-1}\le j_{1}$. In
terms of $j_{0},j_{1}$, the final objective function is 
\begin{align}
W & = a_{0}+\frac{1}{j_{0}+1}a_{j_{0}}+\frac{1}{j_{1}+1}a_{j_{1}} \nonumber\\
 & = x-\frac{j_{0}}{j_{0}+1}a_{j_{0}}-\frac{j_{1}}{j_{1}+1}a_{j_{1}} \nonumber\\
 & = x-\frac{(k-1)(j_{0}+j_{1}+1)-S}{(j_{0}+1)(j_{1}+1)}. \label{eq:maxW}
\end{align}

We see that $W$ is at most the maximum value of (\ref{eq:maxW}) over
all choices of integers $0<j_{0}<j_{1}$ for which $j_{0}\le\frac{S}{k-1}\le j_{1}$.
It is easy to check that when $\frac{S}{k-1}\in[1,2]$, the best choice
is $j_{0}=1,j_{1}=2$, and if $\frac{S}{k-1}\in[2,3]$, the best choice
is $j_{1}=2,j_{2}=3.$ Plugging these choices into (\ref{eq:maxW}), we obtain exactly the desired bound (\ref{eq:W-piecewise-max}).
\end{proof}

Although we will only use the case $x=k$, we state Lemma \ref{lem:lin-program} for general $x$ because it may be useful to tackle the problem of counting $x$-colorings of graphs with chromatic number $k$ even when $x > k$.

\section{Main Theorem \label{sec:Radiant-Vertices}}
We are almost ready to apply the Overprediction Lemma to bad graphs. We make one final simplification which allows us to treat $G$ as if it is $(k-1)$-regular.

Suppose $G=(V,E)$ is a bad graph, so it has minimum degree at least $k-1$. For each vertex $v\in V$, fix an arbitrary set $N^*(v) \subseteq N(v)$ of exactly $k-1$ neighbors. This specified set of neighbors will take the place of $N(v)$ in some of the proceeding calculations. Define $N^{*-}_{\pi}(v) = N^{-}_{\pi}(v) \cap N^*(v)$, and $X_{\pi}^*(c,v)$ to be the number of distinct colors of $k$-coloring $c$ missing from $N^{*-}_{\pi}(v)$. 

Similarly, define $c_i^*(v)$ to be the number of vertices of color $i$ in $N^*(v)$, 
\[
T^*(c,v) = \exp(\mathbb{E}_{\pi}[\log X_{\pi}^*(c,v)])
\]
and 
\[
W^*(c,v) = \sum_{i\le k} \frac{1}{c_i^*(v)+1}.
\]

Then, it follows that $X_{\pi}(c,v) \le X_{\pi}^*(c,v)$, $c_i(v) \ge c_i^*(v)$, $T(c,v) \le T^*(c,v)$, and $W(c,v) \le W^*(c,v)$. It will suffice to bound the quantity $W^*(c,v)$ in place of $W(c,v)$.

\begin{thm}
\label{thm:large-n}If there exists a bad graph on $n$ vertices,
then either $k=4$ and $n\le 12$, or $k\ge 5$ and $n \le 2k-2$.
\end{thm}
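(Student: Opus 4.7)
The plan is to apply the Overprediction Lemma together with the linear program (Lemma \ref{lem:lin-program}), the monochromatic-pair bound (Lemma \ref{lem:subset-pairs}), and the radiant-vertex structure (Lemma \ref{lem:radiant}) to obtain an upper bound on $P_G(k)$ that contradicts the lower bound $P_G(k) \ge k!(k-1)^{n-k}$ coming from bad-ness, whenever $n$ is large enough.

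The argument proceeds in three steps. First, for each vertex $v$, Jensen's inequality gives $\exp(\mathbb{E}_c[\log W^*(c,v)]) \le \mathbb{E}_c[W^*(c,v)]$. Writing $S(c,v) = \sum_i (c_i^*(v))^2 = 2 P(c, N^*(v)) + (k-1)$ and applying Lemma \ref{lem:subset-pairs} to the $(k-1)$-set $U = N^*(v)$ bounds $\mathbb{E}_c[S(c,v)] < 2k-3$. Since $2k-3 < 2k-2$, we are in the first regime of Lemma \ref{lem:lin-program}, and the concavity of the piecewise-linear LP upper bound $U$ combined with another Jensen step yields $\mathbb{E}_c[W^*(c,v)] \le U(\mathbb{E}_c[S(c,v)]) < (4k+1)/6$. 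Unfolding through the product form of the Overprediction Lemma gives the a priori bound $P_G(k) \le ((4k+1)/6)^n$.

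Second, sharpen using radiance. By Lemma \ref{lem:radiant}, for every coloring $c$ there are exactly $k$ radiant vertices; for each such $v$ with color $i = c(v)$, every color $j \ne i$ appears in $N(v)$, so $W(c,v) \le 1 + (k-1)/2 = (k+1)/2$. Splitting the inner sum in the Overprediction bound accordingly,
\[
\sum_v \log T(c,v) \le k \log \frac{k+1}{2} + \sum_{v \notin R(c)} \log W^*(c,v),
\]
taking expectation over $c$, and combining with the Step-1 control on the non-radiant sum (via Jensen and the LP), one obtains a bound of the form
\[
P_G(k) \le \Bigl(\frac{k+1}{2}\Bigr)^{k} \Bigl(\frac{4k+1}{6}\Bigr)^{n-k}.
\]
Third, compare this with $P_G(k) \ge k!(k-1)^{n-k}$ to get
\[
\Bigl(\frac{4k+1}{6(k-1)}\Bigr)^{n-k} \ge \frac{k!}{\bigl((k+1)/2\bigr)^k}.
\]
Since $(4k+1)/(6(k-1)) < 1$ for $k \ge 4$, the left side decays exponentially in $n$; a direct calculation then yields $n \le 2k-2$ for $k \ge 5$ and $n \le 12$ when $k = 4$.

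The main obstacle is making this argument deliver the precise bound $n \le 2k-2$ in the borderline regime of small $k \ge 5$, where the ratio $(4k+1)/(6(k-1))$ is close to $1$ (for example $7/8$ at $k = 5$) and the exponential competition is tight. Closing the gap in those cases likely requires one of several sharpenings: quantifying the strictness of Lemma \ref{lem:subset-pairs} using the edges present inside $N^*(v)$, exploiting Brooks' theorem (Lemma \ref{lem:degk}) via a vertex of degree at least $k$ whose $W$-value is strictly less than $(k+1)/2$, or handling the conditional behaviour of $W^*$ over non-radiant colorings more carefully. For $k = 4$ no such refinement of the method reaches $n \le 2k-2 = 6$, which is why the theorem only claims $n \le 12$ in that case and the residual range is treated by the separate analysis of Section \ref{sec:small-4-crit}.
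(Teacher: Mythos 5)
Your high-level architecture---Overprediction Lemma, then split into radiant and non-radiant vertices, bound the non-radiant contribution via Lemmas \ref{lem:subset-pairs} and \ref{lem:lin-program}, and compare against $k!(k-1)^{n-k}$---is the same as the paper's, but two of the steps as written do not yield the claimed bound, and you correctly sense at the end that the outline as given does not actually reach $n\le 2k-2$. The two issues are both quantitatively fatal.

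First, the exponent $n-k$ on the non-radiant factor is asserted but not derived. Your Step~1 bound $\mathbb{E}_c[W^*(c,v)]<(4k+1)/6$ is \emph{unconditional} over $c$, but after splitting off the $k$ radiant vertices you need to control $\mathbb{E}_c\big[\sum_{v\notin R(c)}\log W^*(c,v)\big]$, which involves a conditional expectation over pairs $(c,v)$ with $v$ non-radiant. Since $\log W^*\ge 0$, dropping the indicator only gives $n\log\frac{4k+1}{6}$, not $(n-k)\log\frac{4k+1}{6}$. To get the $(n-k)$ exponent one must replace the pair-count bound by its conditional version: since $\sum_i\binom{c_i^*(v)}{2}\ge 0$, removing the $k/n$ fraction of radiant $(c,v)$ pairs can only inflate the average by a factor $n/(n-k)$, so $S$ must be replaced by $(k-1)+\frac{n(k-2)}{n-k}$, which is \emph{larger} than your $2k-3$. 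This pushes the LP bound up (for small $n$ it moves into the second piecewise regime of Lemma~\ref{lem:lin-program}), so the usable non-radiant bound is worse than $(4k+1)/6$, not equal to it.

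Second, and more importantly, your radiant-vertex bound $T(c,v)\le W(c,v)\le (k+1)/2$ is too weak. The paper uses the stronger direct bound $T(c,v)\le(k!)^{1/k}$, which comes from observing that for a $c$-radiant $v$ with one representative of each other color in its neighborhood, the number of $u_i$ that precede $v$ in a uniform random $\pi$ is uniform on $\{0,\dots,k-1\}$, so $X_\pi(c,v)$ takes each value $k,k-1,\dots,1$ with probability at least $1/k$ and the geometric mean is at most $(k!)^{1/k}$. Since $(k!)^{1/k}$ is the geometric mean of $1,\dots,k$ and $(k+1)/2$ is the arithmetic mean, one always has $((k+1)/2)^k>k!$, so your radiant factor is strictly worse than $k!$. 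Concretely, at $k=5$, $n=9$ (the boundary case the theorem needs to exclude), the target is $k!(k-1)^{n-k}=120\cdot 4^4=30720$; the paper's bound $k!\cdot W_{\mathrm{cond}}^{n-k}\approx 120\cdot(3.896)^4\approx 27650$ succeeds, while $\big(\tfrac{k+1}{2}\big)^k\cdot W_{\mathrm{cond}}^{n-k}\approx 243\cdot(3.896)^4\approx 55990$ fails, and even your optimistic (but unjustified) $\big(\tfrac{k+1}{2}\big)^k\cdot\big(\tfrac{4k+1}{6}\big)^{n-k}=243\cdot 3.5^4\approx 36466$ fails. Your concluding list of ``sharpenings'' does not include the crucial one: you must bound $T(c,v)$ directly for radiant vertices rather than go through $W(c,v)$, since the AM--GM loss there is exactly what separates success from failure.
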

\begin{proof}
Let $G$ be a bad graph. By applying
Lemma \ref{lem:subset-pairs} to $N^*(v)$ and averaging over all $v$, 
\begin{equation}
\mathbb{E}_{c, v}\Big[\sum_{i\le k}\binom{c_{i}^*(v)}{2}\Big] < \frac{1}{k-1}\binom{k-1}{2}
 = \frac{k-2}{2}. \label{eq:pairbound}
\end{equation}

It is possible to bound $T(c,v)$ on average with (\ref{eq:pairbound}), but we can obtain a stronger bound on $T(c,v)$ for the radiant vertices. Thus, we bound $T(c,v)$ when $v$ is $c$-radiant and when $v$ is not $c$-radiant separately.

If $v$ is $c$-radiant, let $u_1,\ldots, u_{k-1}$ be $k-1$ neighbors of $v$ representing every color except $c(v)$. In a uniform random ordering of $V$, the number of $u_i$ which appear before $v$ is uniformly distributed among $0,\ldots, k-1$. Thus, the probability that $N^{-}_{\pi}(v)$ contains vertices of at least $i$ different colors is at least $\frac{i}{k}$. Under these conditions,
\begin{equation}
T(c,v) = \exp\mathbb{E}_\pi [\log X_\pi(c,v)] = \prod_{i\le k-1} (k-i)^{\Pr[X_\pi(c,v) = k-i]} \le (k!)^{\frac{1}{k}}. \label{eq:radiant-T}
\end{equation}

This is our bound for $T(c,v)$ when $v$ is $c$-radiant. Now we seek to control $T(c,v)$ when $v$ is not $c$-radiant using (\ref{eq:pairbound}). Define
\[
\mathbb{E}^*_{c, v}[Y] = \mathbb{E}_c[\mathbb{E}_v[Y|\text{$v$ is not $c$-radiant}]]
\]
to be the expectation of $Y=Y(c,v)$ under a uniform random choice of a pair $(c,v)$ of a $k$-coloring $c$ and a vertex $v$ which is not $c$-radiant.

In a uniform random choice of a pair $(c,v)$ of a $k$-coloring and a vertex, the probability that $v$ is $c$-radiant is $\frac{k}{n}$. Since the sum of $\binom{c_i^*(v)}{2}$ is always nonnegative, this observation together with (\ref{eq:pairbound}) implies
\begin{equation}
\mathbb{E}^*_{c,v}\Big[\sum_{i\le k}\binom{c_{i}^*(v)}{2}\Big] \le \Big(1-\frac{k}{n}\Big)^{-1} \mathbb{E}_{c,v}\Big[\sum_{i\le k}\binom{c_{i}^*(v)}{2}\Big]
< \frac{n(k-2)}{2(n-k)}. \label{eq:pairbound-conditional}
\end{equation}

For $j \ge 0$, let $a_{j}=\mathbb{E}^*_{c,v}[|\{i:c_{i}^*(v)=j\}|]$.
Then, we claim that these $a_j$ satisfy the conditions
\begin{align}
\sum_{j\ge 0}a_{j} & = k \label{eq:aj1}\\
\sum_{j\ge 0}ja_{j} & = k-1 \label{eq:aj2}\\
\sum_{j\ge 0}j^{2}a_{j} & < S \label{eq:aj3}
\end{align}
of Lemma \ref{lem:lin-program} with
\[
S=(k-1)+\frac{n(k-2)}{n-k}.
\]

The first constraint (\ref{eq:aj1}) is the fact that there are $k$ colors in total. The second constraint (\ref{eq:aj2}) comes from the fact that $|N^*(v)|=k-1$ for each $v$. Adding (\ref{eq:aj2}) to twice (\ref{eq:pairbound-conditional}), we get the third constraint (\ref{eq:aj3}).

The corresponding objective function $\sum_{j \geq 0} \frac{1}{j+1} a_j$ in Lemma \ref{lem:lin-program} is exactly $\mathbb{E}^*_{c,v}[W^*(c,v)]$. We apply Lemma \ref{lem:lin-program} and obtain 
\begin{align*}
\mathbb{E}^*_{c,v}[W^*(c,v)] \le  \begin{cases}
\frac{7k+5}{12}+\frac{n(k-2)}{12(n-k)} & \text{if }2k-1\le n\le k^{2}-k\\
\frac{k+1}{2}+\frac{n(k-2)}{6(n-k)} & \text{if }n > k^{2}-k.
\end{cases}
\end{align*}

From Lemma \ref{lem:AMGM}, $T(c,v)\le W(c,v)$, and we noted that $W(c,v) \le W^*(c,v)$. Hence, by the Overprediction Lemma,
\begin{align*}
P_G(k) &\le 
\prod_{v\in V}\exp(\mathbb{E}_{c}[\log T(c,v)]) \\
& = \exp(n\mathbb{E}_{c,v}[\log T(c,v)]) \\
& = \exp(k\mathbb{E}_{c,v}[\log T(c,v) | \text{$v$ is $c$-radiant}] + (n-k) \mathbb{E}^*_{c,v}[\log T(c,v)]) \\
& \le k! \mathbb{E}^*_{c,v}[W^*(c,v)]^{n-k} \\
& \le
\begin{cases}
k!\Big(\frac{7k+5}{12}+\frac{n(k-2)}{12(n-k)}\Big)^{n-k} & \text{if }2k-1\le n\le k^{2}-k\\
k!\Big(\frac{k+1}{2}+\frac{n(k-2)}{6(n-k)}\Big)^{n-k} & \text{if }n > k^{2}-k.
\end{cases}
\end{align*}

The factor of $k!$ comes from (\ref{eq:radiant-T}). Note that
\[
\frac{7k+5}{12}+\frac{n(k-2)}{12(n-k)} < k-1 \\
\]
when $k\ge 5$ and $2k-1 \le n \le k^2-k$, while
\[
\frac{k+1}{2}+\frac{n(k-2)}{6(n-k)} < k-1
\]
when $k\ge 5$ and $n > k^2 - k$ or $k=4$ and $n\ge 13$.

Plugging these bounds in, we get
\[
P_G(k) < k!(k-1)^{n-k}
\]
when $k\ge5$ and $n\ge2k-1$ or $k=4$ and $n\ge 13$. This completes the proof.
\end{proof}

Theorem \ref{thm:large-n} and Lemma \ref{lem:edge-count} together show that there are no bad graphs except possibly in the case $k=4$ and $7\le n \le 12$. In the next section we improve the above argument to handle $8 \le n \le 12$ and then check $n=7$ by hand.

\section{Small $4$-Critical Graphs\label{sec:small-4-crit}}

In our application of the Overprediction Lemma in Theorem \ref{thm:large-n}, we bounded $T(c,v)$ by the larger but simpler quantity $W(c,v)$ via the AM--GM Inequality. When $k=4$ it is possible to directly bound the average value of $T(c,v)$, improving Theorem \ref{thm:large-n}. We use the same notations as in Section~\ref{sec:Radiant-Vertices}.

\begin{lem} \label{lem:small-4-crit}
If there exists a bad graph on $n$ vertices with chromatic number $k=4$, then $n\le 7$.
\begin{proof}
Let $G$ be the bad graph in question. We use a very similar argument to the proof of Theorem \ref{thm:large-n}. Inequalities (\ref{eq:radiant-T}) and (\ref{eq:pairbound-conditional}) still hold for $G$, showing (respectively) that if $v$ is $c$-radiant,
\begin{equation}
T(c,v) \le (4!)^{\frac{1}{4}}, \label{eq:radiant-T-4}
\end{equation}
and that conditional on $v$ being non-radiant,
\begin{equation}
\mathbb{E}^*_{c,v}\Big[\sum_{i\le 4}\binom{c_{i}^*(v)}{2}\Big] < \frac{n}{n-4}. \label{eq:pairbound-conditional-4}
\end{equation}
Recall that $\mathbb{E}^*_{c,v}$ is the expectation over a uniform choice of a coloring $c$ and a vertex $v$ which is not $c$-radiant.

Now, observe that $|N^*(v)| = k-1 = 3$ for every vertex $v$. For each $j \in {1,2,3}$, define $S_j$ to be the set of non-radiant vertices $v$ for which $N^*(v)$ contains vertices of exactly $j$ distinct colors. The exact value of $T^*(c,v)$ is determined as follows:
\[
T^*(c,v) = \begin{cases}
4^{\frac{1}{4}}\cdot 3^{\frac{3}{4}} &  \text{if } v\in S_1\\
4^{\frac{1}{4}} \cdot 3^{\frac{1}{3}} \cdot 2^{\frac{5}{12}} & \text{if } v \in S_2 \\
(4!)^{\frac{1}{4}} & \text{if } v\in S_3.
\end{cases}
\]
The first case, for example, comes from the fact that if the three elements of $N^*(v)$ are all the same color, then there is a $\frac{1}{4}$ chance that $v$ precedes all of $N^*(v)$ in the random ordering $\pi$ so that $X_{\pi}(c,v)=4$, and otherwise $X_{\pi}(c,v) = 3$.

Let $s_j = \mathbb{E}_{c}[|S_j|]$. Conditioning on the value of $j$ for which $v\in S_j$, we are left with the problem of upper bounding
\begin{equation}\label{eq:exact-T-4}
(n-4)\mathbb{E}_{c,v}^*[\log T^*(c,v)] = s_1 \log(4^{\frac{1}{4}}\cdot 3^{\frac{3}{4}}) + s_2 \log(4^{\frac{1}{4}} \cdot 3^{\frac{1}{3}} \cdot 2^{\frac{5}{12}}) + s_3 \log((4!)^{\frac{1}{4}}).
\end{equation}

Using the fact that there are $4$ radiant vertices, we see that
\begin{equation}
s_1 + s_2 + s_3 = n-4. \label{eq:sj1}
\end{equation}

Next, consider the size of 
\[
P(v)=\sum_{i\le 4}\binom{c_{i}^*(v)}{2}
\]
when $v \in S_1, S_2,$ and $S_3$. When $v\in S_1$, there is a single color for which $c_i^*(v) = 3$, so $P(v) = 3$. When $v\in S_2$, the sizes of $c_i^*(v)$ are $2,1,0,0$, so $P(v) = 1$. When $v\in S_3$, the sizes of $c_i^*(v)$ are $1,1,1,0$, so $P(v) = 0$. Thus, by breaking up the expecation in (\ref{eq:pairbound-conditional-4}) conditional on which $S_j$ each vertex lies in, we get
\begin{equation}
3s_1 + s_2 = (n-4)\mathbb{E}^*_{c,v}\Big[\sum_{i\le 4}\binom{c_{i}^*(v)}{2}\Big] \le n. \label{eq:sj2}
\end{equation}

It remains to solve another linear program. Assuming $n\ge 6$, we find that $(s_1, s_2, s_3) = (2, n-6,0)$ is the unique triple maximizing the quantity (\ref{eq:exact-T-4}) subject to $s_j\ge 0$ and the two constraints (\ref{eq:sj1}) and (\ref{eq:sj2}).

Putting all this together with the Overprediction Lemma,
\begin{align*}
P_G(4) &\le \exp(n\mathbb{E}_{c,v}[\log T(c,v)]) \\
& = \exp(4\mathbb{E}_{c,v}[\log T(c,v) | \text{$v$ is $c$-radiant}] + (n-4) \mathbb{E}^*_{c,v}[\log T(c,v)]) \\
& \le 4! \mathbb{E}^*_{c,v}[T^*(c,v)]^{n-4} \\
& \le 4! \cdot (4^{\frac{1}{4}}\cdot 3^{\frac{3}{4}})^2 \cdot (4^{\frac{1}{4}} \cdot 3^{\frac{1}{3}} \cdot 2^{\frac{5}{12}})^{n-6} \\
& = 4! \cdot 3^{(2n-3)/6} \cdot 2^{(11n-54)/12}.
\end{align*}

The factor of $4!$ comes from (\ref{eq:radiant-T-4}). The last quantity above is smaller than $4! \cdot 3^{n-4}$ if $n\ge 8$, so there are no bad graphs when $n\ge 8$, as desired.
\end{proof}
\end{lem}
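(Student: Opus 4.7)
The plan is to sharpen the argument of Theorem~\ref{thm:large-n} by computing $T^*(c,v)$ exactly for non-radiant vertices when $k=4$, rather than upper-bounding it by $W^*(c,v)$ via AM--GM. The starting observation is that $|N^*(v)|=3$, so the multiset of colors appearing in $N^*(v)$ has only three possible shapes: $(3)$, $(2,1)$, or $(1,1,1)$. I would classify each non-radiant vertex $v$ (for a given coloring $c$) into the corresponding set $S_1$, $S_2$, or $S_3$.

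First, I would compute $T^*(c,v)$ in each case by evaluating $\mathbb{E}_\pi[\log X_\pi^*(c,v)]$. Since only the relative order of $v$ and the three elements of $N^*(v)$ matters, and this relative order is uniform over $4!=24$ permutations, this reduces to a small finite computation yielding three explicit values $T^*_{(j)}$ of the form $4^\alpha 3^\beta 2^\gamma$ with $T^*_{(1)} > T^*_{(2)} > T^*_{(3)}$. Meanwhile, inequality (\ref{eq:radiant-T}) still yields $T(c,v) \le (4!)^{1/4}$ for the $4$ radiant vertices, contributing an overall factor of $4!$ to the Overprediction bound.

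Next, I would bound the quantities $s_j = \mathbb{E}_c[|S_j|]$ via a linear program. The two key constraints are $s_1 + s_2 + s_3 = n-4$ (from counting non-radiant vertices on average) and a bound of the form $3 s_1 + s_2 \le n$, obtained by applying the conditional pair bound (\ref{eq:pairbound-conditional}) with $k=4$ and noting that $\sum_i \binom{c_i^*(v)}{2}$ equals $3, 1, 0$ on $S_1, S_2, S_3$ respectively. Since the objective $\sum_j s_j \log T^*_{(j)}$ has decreasing coefficients, the optimum piles mass onto $S_1$ and $S_2$; eliminating $s_3$ and combining the two constraints gives $2 s_1 \le 4$, so the extreme point is $(s_1, s_2, s_3) = (2, n-6, 0)$ for $n \ge 6$.

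Finally, I would assemble the Overprediction Lemma bound into an explicit expression of the form $4! \cdot 3^{a n + b} \cdot 2^{c n + d}$ and compare it to the target $4! \cdot 3^{n-4}$. A direct numerical check on the exponents should show the former is strictly smaller for all $n \ge 8$, and hence rule out bad graphs on $n \ge 8$ vertices; together with Lemma~\ref{lem:edge-count}, this leaves only $n=7$ for separate treatment. The main obstacle I anticipate is the $S_2$ computation: there the three neighbors split as two vertices of one color plus one of another, so one must carefully track, conditional on the position of $v$ among the four vertices, which colors have already appeared in the backneighborhood. This is the only case where the two same-color neighbors can ``share'' the event of making their color appear before $v$, and an error here would propagate into the final exponential comparison.
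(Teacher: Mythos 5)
Your proposal is essentially identical to the paper's proof: you classify non-radiant vertices by the multiset shape of colors in $N^*(v)$, compute $T^*(c,v)$ exactly for each of the three shapes, set up the same two linear constraints $s_1+s_2+s_3=n-4$ and $3s_1+s_2\le n$ on the expected class sizes, identify the same optimal vertex $(2,n-6,0)$, and compare the resulting exponential bound against $4!\cdot 3^{n-4}$ to rule out $n\ge 8$. The $S_2$ subtlety you flag is real but handled by the same uniform-relative-order computation you describe, giving $T^*(c,v)=4^{1/4}\cdot 3^{1/3}\cdot 2^{5/12}$.
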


We can now finish the proof of Tomescu's conjecture.

{\it Proof of Theorem \ref{thm:main}.}
Combining Theorem \ref{thm:large-n} with Lemma \ref{lem:edge-count}, we see that there are no bad graphs except possibly if $k=4$ and $7\le n \le 12$. In this case, Lemma \ref{lem:small-4-crit} rules out $n \ge 8$, which leaves the sole possibility of $k=4$ and $n = 7$.

Up to isomorphism, there are exactly two $4$-critical graphs on $7$ vertices. One of them is the so-called Mycielskian of a triangle, which can be described explicitly as the graph on vertices $u_1,u_2,u_3, v_1, v_2, v_3, w$, where the edges are $u_i \sim u_j$ for all $i\ne j$, $v_i \sim u_j$ for all $i \ne j$, and $w \sim v_i$ for all $i$. This graph has $4!\cdot 13$ colorings by $4$ colors.

The second $4$-critical graph is the Moser spindle, which is obtained by gluing two diamonds at a vertex and connecting their opposite vertices. Explicitly, it is the graph on vertices $u_1, u_2, u_3, v_1, v_2, v_3, w$ where $u_i \sim u_j$ if $i\ne j$, $v_i~ v_j$ if $i\ne j$, $u_1 \sim v_1$, and $w$ is adjacent to $u_2, u_3, v_2, v_3$. This graph has $4!\cdot 16$ colorings by $4$ colors.

Both graphs satisfy $P_G(4)\le 4!\cdot 3^3$, so we are done. 
\hfill\qed

\section{Concluding Remarks \label{sec:Closing-Remarks}}

In 1990, Tomescu \cite{To1} extended his original conjecture to a general number of colors.

\begin{conjecture}
If $x\ge k\ge4$, and $G$ is a graph on $n$ vertices with chromatic number $k$,
then
\begin{equation*}
P_{G}(x)\le(x)_{k}(x-1)^{n-k},
\end{equation*}
with equality if and only if the $2$-core of $G$ is a $k$-clique.
\end{conjecture}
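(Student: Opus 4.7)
The plan is to follow the strategy of Theorem~\ref{thm:main} essentially verbatim, replacing $k$ by $x$ throughout as the number of colors while keeping $\chi(G)=k$ as the chromatic constraint. Most of the framework extends unchanged: the Overprediction Lemma applies to $x$-colorings for any $x \ge \chi(G)$; Lemma~\ref{lem:AMGM} and Lemma~\ref{lem:lin-program} are already stated for general $x\ge k$; and the definition of a bad graph (minimal counterexample to $P_G(x) \le (x)_k(x-1)^{n-k}$) together with the proof that bad graphs are $k$-critical transfer with only cosmetic changes.

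The first key modification is a strengthened pairwise bound. Contracting two non-adjacent vertices $u,v$ in a bad graph $G$ yields a graph $G'$ with $\chi(G')\ge k$, and minimality gives $P_{G'}(x)\le (x)_k(x-1)^{|V|-1-k}$, so
\[
\Pr[c(u)=c(v)] = \frac{P_{G'}(x)}{P_G(x)} < \frac{1}{x-1},
\]
an improvement over Lemma~\ref{lem:bad-probability} (strictness coming by ruling out $G'$ having $K_k$ as its $2$-core, exactly as there). The second modification is in the radiant step. At any vertex $v$ with $k-1$ neighbors of pairwise distinct colors all different from $c(v)$, the same random-ordering argument as in (\ref{eq:radiant-T}) yields
\[
T(c,v) \le ((x)_k)^{1/k},
\]
so the product of $T(c,v)$ over any $k$ such vertices contributes the desired factor $(x)_k$. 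For the remaining vertices, plugging $S=(k-1)+\tfrac{n(k-1)(k-2)}{(n-k)(x-1)}$ into Lemma~\ref{lem:lin-program} gives $\mathbb{E}^*_{c,v}[W^*(c,v)]\le x-1$ for $n$ sufficiently large; combining via the Overprediction Lemma then produces $P_G(x)\le (x)_k(x-1)^{n-k}$, mirroring Theorem~\ref{thm:large-n}.

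The main obstacle is the generalization of Lemma~\ref{lem:radiant} to $x$-colorings, namely that in any $x$-coloring $c$ of a graph with $\chi(G)=k$, at least $k$ vertices have closed neighborhoods each containing $k$ or more distinct colors (from a common palette, so that the bound above applies uniformly). One natural route is a non-eliminability argument: call a color $i$ \emph{eliminable} in $c$ if every vertex of color $i$ has some color $j\ne i$ missing from its neighborhood, so that the vertices of color $i$ can be simultaneously recolored away from $i$. Iterating such reductions leads to a coloring $c'$ using at most $k$ colors (since $\chi(G)=k$ forbids going lower), at which point Lemma~\ref{lem:radiant} applies to $c'$. The delicate part is transferring the radiant structure of $c'$ back to nice vertices of the original $c$; one may instead hope to identify, at the first stage of reduction where no further elimination is possible, a set of $k$ non-eliminable colors whose forced ``witness'' vertices are nice directly in $c$.

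Finally, the base cases parallel Section~\ref{sec:Setup} and Section~\ref{sec:small-4-crit}: the analog of Lemma~\ref{lem:edge-count}, using the improved $1/(x-1)$ probability bound together with Gallai's lower bound on edges of $k$-critical graphs, should rule out bad graphs with $n\le 2k-2$ for every $x\ge k$, in fact with room to spare that grows with $x-k$. A shrinking finite range of $(k,n,x)$ with $k$ small and $x$ close to $k$ is likely to remain, to be settled by a refined $S_j$-partition analysis analogous to Lemma~\ref{lem:small-4-crit} or by direct enumeration of small $k$-critical graphs. The equality characterization then comes out of tracing equality in the Overprediction Lemma, in AM--GM, and in the linear program, forcing the $2$-core of $G$ to be $K_k$ as in the proof of Theorem~\ref{thm:main}.
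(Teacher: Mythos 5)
You are addressing the paper's final Conjecture, Tomescu's 1990 generalization, which the paper leaves open; the authors state explicitly that their ``methods meet an obstruction to resolving the general case $x>k$'' because radiant vertices ``do not necessarily exist in $x$-colorings of graphs of chromatic number $k$ when $x>k$.'' Your proposal correctly transports most of the machinery --- the minimal-counterexample reduction, the $1/(x-1)$ contraction bound, the geometric-mean bound $T(c,v)\le((x)_k)^{1/k}$ for vertices seeing $k-1$ distinct neighbor colors, and the modified parameter $S$ in the linear program --- but you yourself flag the missing ingredient as ``the main obstacle,'' and the sketch you offer for it does not close the gap. The iterated color-elimination argument only yields witness vertices radiant with respect to the \emph{reduced} coloring $c'$, whereas the Overprediction Lemma needs bounds on $T(c,v)$ computed from the \emph{original} coloring $c$; a vertex's neighborhood colors can change arbitrarily under the recolorings that pass from $c$ to $c'$, and ``the first stage of reduction where no further elimination is possible'' is reached only after the coloring has already changed, so there is no guarantee of even one non-eliminable color, let alone $k$ of them with suitable witnesses, in $c$ itself. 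This is precisely the obstruction the paper names, and your statement that one ``may instead hope'' to salvage it is an accurate self-assessment: the crucial step is still missing, and no new idea is supplied to produce it.

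Secondary to this, several of the supporting claims are asserted rather than verified: Lemma~\ref{lem:lin-program} requires $S\in[k-1,3k-3]$, which need not hold for all $(n,k,x)$ (in particular when $n$ is close to $k$); the range of $n$ for which the resulting bound on $\mathbb{E}^*_{c,v}[W^*(c,v)]$ falls below $x-1$ needs explicit control as a function of both $n$ and $x$; and the generalized edge-counting step via Gallai's bound, together with the enumeration of the residual small cases, is only gestured at. These are plausibly routine once a correct radiant analogue is in hand, but as written the proposal is an outline that reproduces the paper's program while leaving its acknowledged obstruction unresolved; it is not a proof of the Conjecture.
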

Here $(x)_{k}$ is the falling factorial $x(x-1)\cdots(x-k+1)$. Knox
and Mohar \cite{KnMo1,KnMo2} proved this more general conjecture for
$k=4$ and $5$, but our methods meet an obstruction to resolving the general
case $x>k$. We relied on the existence of radiant vertices
in Lemma \ref{lem:radiant} to recover the factor of $k!$ when $x=k$, and such vertices do not necessarily exist in $x$-colorings of graphs of chromatic number $k$ when $x>k$.

Our proof of Tomescu's conjecture began with the fact that a minimal counterexample would have to be $k$-critical. However, while the bound in Tomescu's conjecture is tight for chromatic number $k$, it is no longer tight when we restrict our attention to $k$-critical graphs. We are thus led to the following question.

\begin{ques}\label{criticalquestion}
What is the maximum of $P_G(k)$ over all $k$-critical graphs on $n$ vertices?
\end{ques}
Knox and Mohar \cite{KnMo1} recently announced the bound $P_G(k) \leq (k-1)^{n-c\log n}(k-2)^{c\log n}$ holds for every graph $G$ on $n$ vertices of minimum degree at least three and with no twins (two vertices with identical neighborhoods), where $c>0$ is an absolute constant. This same bound holds for every $k$-critical graph with $k \geq 4$ because critical graphs have no twins, and improves on Theorem \ref{thm:main} when $n$ is superexponentially large in $k^2$. Our methods suggest much stronger bounds than Theorem \ref{thm:main} may hold in Question \ref{criticalquestion} and we plan to explore this problem further.

There are many other attractive optimization problems in the study of graph colorings. Linial \cite{Li} studied the problem of minimizing $P_{G}(k)$ over all graphs with $n$ vertices and $m$ edges, and found that the complete graph $K_s$ with one additional vertex adjacent to $l$ vertices of the clique and $n-s-1$ isolated vertices, where $m={s \choose 2}+l$ and $0 \leq l < s$, is a minimizer for every integer $k$.

Linial also proposed the related problem of maximizing $P_{G}(k)$ over graphs $G$ with $n$ vertices and $m$ edges, which is more challenging because the extremal graphs no longer have such a simple structure. Bender and Wilf \cite{BeWi,Wi} arrived at the same maximization problem independently, via the study of the backtrack algorithm for enumerating all proper $k$-colorings of a graph $G$. Lazebnik \cite{La} was the first to make substantial progress on this problem, and conjectured that the Tur\'an graphs are always extremal graphs for the coloring-maximization problem (see \cite{LaPiWo}). Many cases of Lazebnik's conjecture have been verified \cite{LaTo, LoPiSu, No, Tof}, but the full conjecture was recently disproved by Ma and Naves \cite{MaNa}.

More recently, Galvin and Tetali \cite{GaTe} considered the problem of maximizing $P_G(k)$ over $d$-regular graphs $G$ with $n$ vertices. They conjectured that for $n$ a multiple of $2d$, the optimal $G$ consists of $\frac{n}{2d}$ disjoint copies of the complete bipartite graph $K_{d,d}$. Using entropy methods, they proved this conjecture under the additional assumption that $G$ is bipartite. This conjecture was later resolved using a linear programming relaxation for $3$-regular graphs by Davies et al. \cite{DaJePeRo} and for $4$-regular graphs when $k\ge 5$ by Davies \cite{Da}.

\medskip

\noindent {\bf Acknowledgements.} We would like to thank Aysel Erey, J\'anos Pach and the referees for many helpful comments.

\medskip

After we submitted this paper, we learned \cite{Mo} that Bojan Mohar and Fiachra Knox have also announced a proof of Tomescu's conjecture. We also learned from Persi Diaconis that the counting method in the Overprediction Lemma can be viewed as an instance of a general estimation method known as sequential importance sampling (see for example \cite{BlDi}).

\end{document}